\theoremstyle{ams}
\newtheorem{theorem}{Theorem}[section]
\newtheorem{proposition}[theorem]{Proposition}
\newtheorem{lemma}[theorem]{Lemma}
\newtheorem{corollary}[theorem]{Corollary}
\theoremstyle{definition}
\newtheorem{question}[theorem]{Question}
\newtheorem{definition}[theorem]{Definition}
\newcommand{\C}{\mathbb{C}}
\newcommand{\R}{\mathbb{R}}
\begin{document}
\title[Unimodality of Betti numbers]{Unimodality of Betti numbers for Hamiltonian circle actions with index-increasing moment maps}

\author[Y. Cho]{Yunhyung Cho}
\address{Departamento de matem\'{a}tica, Centro de An\'{a}lise Matem\'{a}tica, Geometria e Sistemas
Din\^{a}micos-LARSYS, Instituto Superior T\'{e}cnico, Av. Rovisco Pais 1049-001 Lisbon, Portugal}

\email{yhcho@kias.re.kr, ~yuncho@math.ist.utl.pt}

\maketitle

\begin{abstract}

The unimodality conjecture posed by Tolman in \cite{JHKLM} states that if $(M,\omega)$ is a $2n$-dimensional smooth compact symplectic manifold equipped with a Hamiltonian circle action with only isolated fixed points, then the sequence of Betti numbers $\{b_0(M), b_2(M), \cdots\}$ is unimodal, i.e. $b_i(M) \leq b_{i+2}(M)$ for every $i < n$.
Recently, the author and M. Kim \cite{CK} proved that the unimodality holds in eight-dimensional cases by using equivariant cohomology theory.
In this paper, we generalize the idea in \cite{CK} to an arbitrary dimensional case. Also, we prove the conjecture in arbitrary dimension with an assumption that a moment map $H : M \rightarrow \R$ is \textit{index-increasing}, which means that $\mathrm{ind}(p) < \mathrm{ind}(q)$ implies $H(p) < H(q)$ for every pair of critical points $p$ and $q$ of $H$ where $\mathrm{ind}(p)$ is a Morse index of $p$ with respect to $H$.

\end{abstract}

\section{introduction}

Let $\mathcal{A} = \{a_1,\cdots,a_n\}$ be a finite sequence of real numbers. We say that $\mathcal{A}$ is \textit{unimodal} if there is a positive integer $k$ (called a \textit{mode} of $\mathcal{A}$) such that $a_i \leq a_{i+1}$ for every $i<k$ and $a_j \geq a_{j+1}$ for every $j \geq k$. Similarly, we say that a polynomial $\sum_i a_ix_i \in \R[x]$ is \textit{unimodal} if $\{a_i \}$ is unimodal.

In K\"{a}hler geometry, it is well-known that a sequence of even (odd, respectively) Betti numbers is unimodal. More precisely, let $(M,\omega,J)$ be a complex $n$-dimensional compact K\"{a}hler manifold. Then the hard Lefschetz theorem says that
\begin{displaymath}
\begin{array}{cccc}
[\omega]^{n-k} : & H^{k}(M;\R) & \longrightarrow &
H^{2n-k}(M;\R)\\ 
& \alpha & \mapsto & \alpha
\wedge [\omega]^{n-k}
\end{array}
\end{displaymath}
is an isomorphism for every $k=0, 1, \cdots, n$. In particular, the map $$H^k(M;\R) \stackrel{\wedge [\omega]} \longrightarrow H^{k+2}(M;\R)$$ is injective for every $k<n$. Hence
the sequences $\{b_0(M), b_2(M), \cdots, b_{2n}(M)\}$ and $\{b_1(M), b_3(M), \cdots, b_{2n-1}(M) \}$ are unimodal by Poincar\'{e} duality. In the symplectic category, there are a lot of examples which do not satisfy the hard Lefschetz theorem so that the unimodality of Betti numbers is not obvious in general. (See \cite{BG}, \cite{Cho}, and \cite{Gom}).

In the conference ``Moment maps in various geometries" in 2005, the unimodality of Betti numbers of symplectic manifolds was discussed in the equivariant setting (See \cite{JHKLM}). More precisely, let $S^1$ be the unit circle group acting on a symplectic manifold $(M,\omega)$ in a Hamiltonian fashion. S. Tolman posed the following question.


\begin{question}\cite{JHKLM} \label{question : Tolman}
Let $(M,\omega)$ be a $2n$-dimensional closed symplectic manifold equipped with a Hamiltonian circle action with only isolated fixed points. Then is $\{b_0(M), b_2(M), \cdots, b_{2n}(M) \}$ unimodal?
\end{question}

As far as the author knows, Question \ref{question : Tolman} was originated from the existence problem of non-K\"{a}hler symplectic manifold with a Hamiltonian circle action with only isolated fixed points. In other words, there has not been found any example of compact Hamiltonian $S^1$-manifold with only isolated fixed points which is NOT homotopy equivalent to any K\"{a}hler manifold. Since the problem of the existence of K\"{a}hler structure is essentially related to the classification problem, it is extremely hard to deal with in general.
Hence it seems more reasonable to investigate whether our manifold satisfies conditions with which any K\"{a}hler manifold satisfies.
Note that there are several simple topological obstructions for the existence of K\"{a}hler structure, such as the evenness of odd Betti numbers $b_{2i+1}(M)$ induced by the Hodge symmetry, and the unimodality of Betti numbers induced by the hard Lefschetz theorem.

There is another point of view to consider Question \ref{question : Tolman}. By Delzant's theorem \cite{De}, any $2n$-dimensional compact symplectic manifold $(M,\omega)$ equipped with an effective Hamiltonian $T^n$-action is a projective toric variety so that the sequence of Betti numbers is unimodal by the hard Lefschetz theorem.
Since the action is effective, the fixed point set $M^{T^n}$ must be discrete. Hence for a generic choice of a circle subgroup $S^1 \subset T^n$, the induced Hamiltonian circle action has only isolated fixed points. Consequently, we may regard Question \ref{question : Tolman} as a generalization problem of the unimodality of Betti numbers for Hamiltonian $T^n$-actions to the case of Hamiltonian $S^1$-action.

Recently, the author and M. Kim \cite{CK} proved that the unimodality holds in eight-dimensional cases.
In this paper, we generalized the idea used in \cite{CK} and proved the unimodality of Betti numbers for Hamiltonian circle action with isolated fixed points with some extra condition. More precisely, let $H : M \rightarrow \R$ be a moment map for a Hamiltonian $S^1$-manifold $(M,\omega)$ with isolated fixed point set $M^{S^1}$. Then it is well-known that $H$ is a perfect Morse function whose critical point set of $H$ equals to $M^{S^1}$. We denote by $\mathrm{ind}(p)$ a Morse index of a critical point $p$ of $H$.
We say that $H$ is \textit{index-increasing} if
$$ \mathrm{ind}(p) < \mathrm{ind}(q) \Rightarrow  H(p) < H(q)$$
for every $p$ and $q$ in $M^{S^1}$. Our main result is as follows.

    \begin{theorem}\label{theorem : main}
        Let $(M,\omega)$ be a $2n$-dimensional closed symplectic manifold equipped with a Hamiltonian circle action with only isolated fixed points. Let $H : M \rightarrow \R$ be a moment map which is index-increasing.
        Then the sequence of even Betti numbers of $M$ is unimodal.
    \end{theorem}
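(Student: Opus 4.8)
The plan is to work with $S^1$-equivariant cohomology (real coefficients) and the Morse theory of $H$. Since $H$ is a perfect Morse function whose critical set is the isolated fixed point set $M^{S^1}$ and all its indices are even, $M$ has cohomology concentrated in even degrees and the action is equivariantly formal; in particular $H^*_{S^1}(M)$ is a free module over $H^*_{S^1}(\mathrm{pt})\cong\R[t]$ ($\deg t=2$) and the restriction map $H^*_{S^1}(M)\hookrightarrow\bigoplus_{p\in M^{S^1}}\R[t]$ is injective. For each fixed point $p$, write $\mathrm{ind}(p)=2\lambda_p$; there is a canonical equivariant class $\alpha_p\in H^{2\lambda_p}_{S^1}(M)$ (the Kirwan/Morse class, carried by the closure of the stable manifold of $p$) determined by $\alpha_p|_p=e^-_p$, the equivariant Euler class of the negative normal bundle at $p$ — a nonzero homogeneous polynomial in $t$ of degree $2\lambda_p$ — together with $\alpha_p|_q=0$ whenever $q\neq p$ and $H(q)\le H(p)$. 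These form an $\R[t]$-basis of $H^*_{S^1}(M)$, and their images $\bar\alpha_p$ an $\R$-basis of $H^*(M)$ with $\bar\alpha_p\in H^{2\lambda_p}(M)$. I would also record the \emph{opposite} classes $\beta_p\in H^{2(n-\lambda_p)}_{S^1}(M)$, the analogous classes for $-H$, for which the Atiyah--Bott--Berline--Vergne localization formula gives $\int_M\alpha_p\,\beta_q=\delta_{pq}$ when $\lambda_p=\lambda_q$, and $\int_M\alpha_p\,\beta_q=0$ when $\lambda_p<\lambda_q$. The index-increasing hypothesis enters here decisively: it forces $M^{S^1}$ to split into \emph{index bands} $P_0,\dots,P_n$ with $|P_k|=b_{2k}(M)$, ordered so that $H$ is strictly smaller on $P_k$ than on $P_{k+1}$, and therefore the vanishing condition on $\alpha_p$ sharpens to $\alpha_p|_q=0$ whenever $q\neq p$ and $\lambda_q\le\lambda_p$; thus the restriction matrix $(\alpha_p|_q)$ is block triangular for the band decomposition, with invertible (indeed diagonal) diagonal blocks, and similarly for $(\beta_p|_q)$.

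Next I would reduce the theorem to an injectivity statement. By Poincar\'e duality $b_{2k}(M)=b_{2n-2k}(M)$, so it suffices to show $b_{2i}(M)=|P_i|\le|P_{i+1}|=b_{2i+2}(M)$ for every $0\le i<n/2$; this together with Poincar\'e duality gives unimodality with mode in the middle. Given $\bar\eta=\sum_{p\in P_i}x_p\,\bar\alpha_p\in H^{2i}(M)$, lift it to its canonical representative $\eta=\sum_{p\in P_i}x_p\,\alpha_p\in H^{2i}_{S^1}(M)$; since $\deg\eta=2i$ we have $\eta|_q\in\R\,t^i$ for every fixed point $q$, so we obtain a well-defined linear map
\[
\Psi_i:H^{2i}(M)\longrightarrow\bigoplus_{q\in P_{i+1}}\R,\qquad \Psi_i(\bar\eta)=\big(\eta|_q/t^i\big)_{q\in P_{i+1}} .
\]
Because $\operatorname{rank}\Psi_i\le|P_{i+1}|$, it is enough to prove that $\Psi_i$ is injective; concretely, this says that a degree-$2i$ cohomology class is detected by the restrictions of its canonical lift to the fixed points of the next band.

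The crux is the injectivity of $\Psi_i$. Suppose $\eta=\sum_{p\in P_i}x_p\,\alpha_p$ lies in $\ker\Psi_i$. From the band structure, $\eta|_q=0$ for all $q$ with $\lambda_q<i$, $\eta|_p=x_p e^-_p$ for $p\in P_i$, and $\eta|_q=0$ for $q\in P_{i+1}$ by hypothesis, so $\eta$ vanishes at every fixed point of index at most $2i+2$ except, possibly, those of index exactly $2i$. Pairing $\eta$ against the opposite classes $\beta_q$ via localization — in which the terms outside the bands $P_0,\dots,P_i$ drop out because of the support of $\beta_q$, and the terms in $P_0,\dots,P_{i-1}$ drop out because of the support of $\eta$ — produces, for each $q\in P_{i+1}$, the identity $\sum_{s\in P_i}x_s\,\big(\beta_q|_s\big)/e^+_s=0$, where $e^+_s$ is the equivariant Euler class of the positive normal bundle at $s$; these are exactly the relations defining the analogous map for $-H$. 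Thus localization alone interchanges the statement for $H$ with the one for $-H$ but does not by itself close the argument, and the genuine difficulty — the point to which I expect the bulk of the work to be devoted, generalizing the eight-dimensional computation of \cite{CK} — is to extract from the triangular structure of the restriction matrices, together with the positivity of the weights of the $S^1$-representations on the normal bundles (which controls signs in the localization formula), enough further relations, perhaps via the ring structure of $H^*_{S^1}(M)$, Poincar\'e duality on $M$, or an induction on the bands, to force $x_s=0$ for all $s\in P_i$ and hence $\eta=0$. Making this final step work is the main obstacle.
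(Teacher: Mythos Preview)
Your proposal has a genuine gap, and you correctly identify it: after setting up the map $\Psi_i$ and observing that localization against the opposite classes $\beta_q$ merely swaps the problem for the one attached to $-H$, you have no mechanism to force $x_s=0$. The ``further relations'' you hope to extract from triangularity, ring structure, or induction are not supplied, and in fact the paper's argument does not proceed along these lines at all.

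What the paper does instead is abandon the injectivity formulation entirely and argue by contradiction with a carefully engineered class of degree $2n-2$. Assume $b_{2k}>b_{2k+2}$ for some $2k<n$. By a plain dimension count (no canonical classes needed), one finds a nonzero $\alpha\in H^{2k}_{S^1}(M)$ vanishing on a set $\mathcal P$ of fixed points chosen so that on the complement the \emph{parity} of the Morse index is controlled: roughly, $\mathcal P$ consists of the fixed points of index $2k-2,2k-6,\ldots$, together with those of index $2n-2k-2$ and of index $2n-2k+4,2n-2k+8,\ldots$; one checks $|\mathcal P|=b_0+\cdots+b_{2k-2}+b_{2k+2}<\dim H^{2k}_{S^1}(M)$. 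The index-increasing hypothesis then allows one to pick regular values $r_1<\cdots<r_{n-2k-1}$ separating the bands, and to form
\[
\beta=\alpha^2\cdot[\omega_{H_1}]\cdots[\omega_{H_{n-2k-1}}]\in H^{2n-2}_{S^1}(M),\qquad H_j:=H-r_j,
\]
where $[\omega_{H_j}]$ is the equivariant symplectic class, whose restriction at a fixed point $z$ is $-H_j(z)\,u$. Since $\deg\beta<2n$, the localized integral $\int_M\beta$ vanishes. But each summand $(\alpha|_z)^2\prod_j(-H_j(z))/e^{S^1}(z)$ has, after multiplying by $(-1)^k$, a nonnegative sign: the square kills the sign of $\alpha$, the factors $-H_j(z)$ have signs determined by which band $z$ lies in, and the choice of $\mathcal P$ guarantees that the only $z$ with $\alpha|_z\neq 0$ have Morse index of the parity making $e^{S^1}(z)$ match. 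Kirwan injectivity gives at least one strictly positive term, contradicting $\int_M\beta=0$.

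The two ingredients you are missing are therefore (i) the equivariant symplectic class $[\omega_H]$, whose restriction to a fixed point is $-H(z)u$ and hence carries a \emph{sign} governed by the band structure, and (ii) the device of squaring $\alpha$ and multiplying by enough $[\omega_{H_j}]$'s to land in degree $2n-2$, together with a vanishing set $\mathcal P$ designed around the parity of $\lambda_z$ (since $\operatorname{sign} e^{S^1}(z)=(-1)^{\lambda_z}$), not merely around the bands $P_0,\ldots,P_{i-1},P_{i+1}$. Your restriction-to-the-next-band map $\Psi_i$ does not see the high-index fixed points at all, which is why pairing with $\beta_q$ lands you back where you started; the paper's $\mathcal P$ deliberately includes high-index points precisely to control those contributions.
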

Our idea is as follows.
Let $(M,\omega)$ be a $2n$-dimensional compact Hamiltonian $S^1$-manifold with only isolated fixed points $M^{S^1} = \{F_1,\cdots,F_m\}$.
Since the corresponding moment map is perfect Morse, the number of fixed points of index $2i$ equals to $b_{2i}(M)$.
Now, let's consider the decomposition of equivariant cohomology
$$H^*_{S^1}(M) = \mathcal{R}_0 \oplus \mathcal{R}_2 \oplus \cdots$$ where $\mathcal{R}_{2i}$ is the set of all elements of degree $2i$ in $H^*_{S^1}(M)$.
The main point of our proof is, for any $I \subset [m] = \{1,\cdots,m\}$ with $ |I| = k < \dim_{\R} \mathcal{R}_{2i}$, there must exist an element $\alpha \in \mathcal{R}_{2i}$
such that the restriction of $\alpha$ on the fixed point set $\{ F_{i_j} ~|~ i_j \in I\}$ vanishes. This fact is obvious, since the restriction map
\begin{displaymath}
    \begin{array}{llccc}
        r & : & \mathcal{R}_{2i} & \rightarrow & \bigoplus_{i \in I} H^{2i}_{S^1}(F_i) \cong \R^k \\[0.5em]
          &   & \alpha           & \mapsto     & (\alpha|_{F_{i_1}}, \cdots, \alpha|_{F_{i_k}}) \\
    \end{array}
\end{displaymath}
is $\R$-linear and it must have non-trivial kernel by dimensional reason. Here, the restriction $\alpha|_F$ means $i^*_F(\alpha)$ where
$$ i^*_F : H^*_{S^1}(M) \rightarrow H^*_{S^1}(F) \cong \R[u] $$
is a ring homomorphism induced by the inclusion $i_F : F \hookrightarrow M$.
Hence if unimodality fails, i.e. if there is some positive integer $i < n$ such that $b_{2i}(M) > b_{2i+2}(M)$, then there must be some element $\alpha \in H^*_{S^1}(M)$ such that
the restriction $\alpha|_z$ vanishes for every fixed point $z$ of index $k$ for $k < 2i$ or $k = 2i+2$ since $b_0(M) + \cdots + b_{2i-2}(M) + b_{2i}(M) > b_0(M) + \cdots + b_{2i-2}(M) + b_{2i+2}(M)$ by our assumption. In Section 3, we will show that the existence of such $\alpha$ leads to a contraction with Atiyah-Bott-Berline-Vergne localization theorem \ref{theorem : localization}.

This paper is organized as follows.
In Section 2, we give a brief introduction to equivariant cohomology theory for Hamiltonian circle actions, including Kirwan's injectivity theorem and Atiyah-Bott-Berline-Vergne localization theorem for circle actions. In Section 3, we give the complete proof of Theorem \ref{theorem : main}.

\section{Equivariant cohomology}

In this section, we give a brief introduction to equivariant cohomology theory for a Hamiltonian circle action on a symplectic manifold.
Throughout this section, we assume that $(M,\omega)$ is a $2n$-dimensional smooth compact symplectic manifold equipped with a Hamiltonian $S^1$-action with a moment map $H : M \rightarrow \R$. Also, every coefficient of any cohomology theory is assumed to be the field of real numbers $\R$.
\subsection{Equivariant cohomology.}An equivariant cohomology $H^*_{S^1}(M)$ is defined by
                                $$ H^*_{S^1}(M) = H^*(M \times_{S^1} ES^1) $$ where $ES^1$ is a contractible space on which $S^1$ acts freely. In particular, the equivariant cohomology of
                                a point $p$ is $$H^*_{S^1}(p) = H^*(p \times_{S^1} ES^1) = H^*(BS^1) $$ where $BS^1 = ES^1 / S^1$ is the classifying space of $S^1$. Note that $BS^1$ can be constructed as an inductive limit of the sequence of Hopf fibrations
                                \begin{equation}
                                    \begin{array}{ccccccccc}
                                        S^3          & \hookrightarrow & S^5        & \hookrightarrow & \cdots & S^{2n+1} & \cdots & \hookrightarrow & ES^1 \sim S^{\infty} \\
                                        \downarrow   &                 & \downarrow &                 & \cdots & \downarrow & \cdots &                 & \downarrow \\
                                        \C P^1       & \hookrightarrow & \C P^2     &\hookrightarrow  & \cdots & \C P^n      & \cdots & \hookrightarrow &BS^1 \sim \C P^{\infty}
                                    \end{array}
                                \end{equation}
                                Hence we have $$H^*(BS^1) = \R[u]$$ where $u$ is an element of degree two with $\langle u, [\C P^1] \rangle = 1$.

\subsection{Equivariant formality} Note that a projection map $M \times ES^1 \rightarrow ES^1$ on the second factor is $S^1$-equivariant so that it induces a map
                                       $$ \pi : M \times_{S^1} ES^1 \rightarrow BS^1 $$  which makes $M \times_{S^1} ES^1$ into an $M$-bundle over $BS^1$
                                       \begin{equation}\label{diagram : M-bundle over BS^1}
                                            \begin{array}{ccc}
                                                M \times_{S^1} ES^1 & \stackrel{f} \hookleftarrow & M \\[0.3em]
                                                \pi \downarrow          &                             &   \\[0.3em]
                                                BS^1                   &                             &
                                            \end{array}
                                       \end{equation}
                                       where $f$ is an inclusion of $M$ as a fiber. So it induces the following sequence of ring homomorphisms
                                       $$ H^*(BS^1) \stackrel{\pi^*} \rightarrow H^*_{S^1}(M) \stackrel{f^*} \rightarrow H^*(M). $$
                                       In particular, $H^*_{S^1}(M)$ has an $H^*(BS^1)$-module structure via the map $\pi^*$ such that
                                       $$ u \cdot \alpha = \pi^*(u)\cup \alpha $$ for $u \in H^*(BS^1)$ and $\alpha \in H^*_{S^1}(M)$.
                                       In our situation, the equivariant cohomology of Hamiltonian circle action has a remarkable property as follows.
                                       \begin{theorem}\label{theorem : equivariant formality}\cite{Ki}
                                            Let $(M,\omega)$ be a smooth compact symplectic manifold equipped with a Hamiltonian circle action. Then $M$ is equivariatly formal, that is,
                                            $H^*_{S^1}(M)$ is a free $H^*(BS^1)$-module so that $$H^*_{S^1}(M) \cong H^*(M) \otimes H^*(BS^1).$$
                                            Equivalently, the map $f^*$ is surjective with kernel $u \cdot H^*_{S^1}(M)$ where $\cdot$ means the scalar multiplication of $H^*(BS^1)$-module structure on $H^*_{S^1}(M)$.
                                       \end{theorem}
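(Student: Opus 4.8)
The plan is to use the moment map $H$ itself as an $S^1$-invariant Morse--Bott function and to show that the resulting stratification of $M$ is \emph{equivariantly perfect}; freeness of $H^*_{S^1}(M)$ then falls out of a Poincar\'e series comparison. The critical set of $H$ is the fixed point set $M^{S^1}$, a disjoint union of closed symplectic submanifolds $F_1,\dots,F_m$ on which $S^1$ acts trivially. Near $F_j$ the normal bundle $\nu_j$ splits into $S^1$-weight subbundles with nonzero weights, and fixing an $S^1$-invariant $\omega$-compatible almost complex structure turns each of them into a complex bundle; in particular the negative normal bundle $\nu_j^-$ of the Hessian of $H$ is a complex subbundle, so its real rank $\lambda_j := \mathrm{rank}_{\R}\,\nu_j^-$ is even and $\nu_j^-$ is canonically oriented.

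Next I would run equivariant Morse theory on the Borel construction. The sublevel sets $M^{c} := H^{-1}((-\infty,c])$ give an $S^1$-invariant filtration of $M$, hence a filtration of $M\times_{S^1}ES^1$ whose successive quotients are the equivariant Thom spaces of the bundles $\nu_j^-\times_{S^1}ES^1$ over $F_j\times_{S^1}ES^1 = F_j\times BS^1$. The equivariant Euler class $e_{S^1}(\nu_j^-)\in H^*_{S^1}(F_j) = H^*(F_j)[u]$ is a polynomial in $u$ of degree $\lambda_j/2$ whose leading coefficient is a nonzero scalar (a product of nonzero weights), hence a non-zero-divisor. By the Atiyah--Bott self-intersection argument the composite ``Thom isomorphism followed by restriction to $F_j\times BS^1$'' is cup product with $e_{S^1}(\nu_j^-)$, so it is injective; therefore every connecting homomorphism in the long exact sequences of the filtration vanishes and the filtration is equivariantly perfect:
$$ P_t^{S^1}(M) \;=\; \sum_{j=1}^m t^{\lambda_j}\,P_t^{S^1}(F_j) \;=\; \Big(\sum_{j=1}^m t^{\lambda_j}\,P_t(F_j)\Big)\cdot P_t(BS^1), $$
using $P_t^{S^1}(F_j) = P_t(F_j)\cdot P_t(BS^1)$ since $S^1$ acts trivially on $F_j$. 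To stay in finite dimensions one does this with the approximations $M\times_{S^1}S^{2N+1}\to\CP^{N}$ and passes to the limit.

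To conclude, feed this into the Serre spectral sequence of the fibration $M\stackrel{f}\hookrightarrow M\times_{S^1}ES^1\stackrel{\pi}\to BS^1$, whose $E_2$-page is $H^*(BS^1)\otimes H^*(M)$ with total Poincar\'e series $P_t(M)\cdot P_t(BS^1)$. Since $E_\infty$ is a subquotient of $E_2$ its total Poincar\'e series is at most $P_t(M)\cdot P_t(BS^1)$ coefficientwise, while it equals $P_t^{S^1}(M) = (\sum_j t^{\lambda_j}P_t(F_j))\cdot P_t(BS^1)$; combined with the ordinary Morse--Bott inequality $\sum_j t^{\lambda_j}P_t(F_j)\ge P_t(M)$ this forces $\sum_j t^{\lambda_j}P_t(F_j) = P_t(M)$ and hence $E_2 = E_\infty$. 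Because we work over a field there are no extension problems, so as graded vector spaces $H^*_{S^1}(M)\cong H^*(M)\otimes H^*(BS^1)$; moreover the edge homomorphism, which is exactly $f^*$, is onto $E_\infty^{0,*} = H^*(M)$, so by the Leray--Hirsch theorem $H^*_{S^1}(M)$ is a free $H^*(BS^1)$-module. Finally $\ker f^*$ is the first filtration piece, which for a base with $\widetilde H^{\mathrm{odd}}(BS^1) = 0$ is precisely $u\cdot H^*_{S^1}(M)$, giving the ``equivalently'' clause.

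The main obstacle is the equivariant perfectness in the second step, i.e. the vanishing of the connecting maps; everything there hinges on the evenness of the Morse--Bott indices and on $e_{S^1}(\nu_j^-)$ being a non-zero-divisor, which in turn is a genuinely special feature of Hamiltonian circle (more generally torus) actions coming from the complex structure on the weight subbundles of the normal bundle to the fixed set, and which fails for arbitrary smooth $S^1$-manifolds. By comparison the infinite-dimensionality of $M\times_{S^1}ES^1$ is only a routine nuisance, handled by the $\CP^{N}$-approximation, and the passage from the numerical identity to module freeness is purely formal.
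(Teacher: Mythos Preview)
The paper does not prove this theorem at all: it is stated with a citation to Kirwan \cite{Ki} and used as a black box, so there is no ``paper's own proof'' to compare against. Your proposal is a correct outline of exactly the argument in Kirwan's monograph (equivariant Morse theory on the moment map, evenness of indices and non-zero-divisor equivariant Euler classes forcing equivariant perfection, then collapse of the Serre spectral sequence and Leray--Hirsch), so it matches the cited source rather than the present paper.

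One small remark on the last clause: your identification of $\ker f^*$ with $u\cdot H^*_{S^1}(M)$ via ``the first filtration piece'' is correct but slightly telegraphic. The clean way to see it, once you already have freeness, is to pick a free $H^*(BS^1)$-basis $\{e_i\}$ of $H^*_{S^1}(M)$ whose images $f^*(e_i)$ form an $\R$-basis of $H^*(M)$; writing $\alpha = \sum_i p_i(u)\,e_i$ one has $f^*(\alpha)=\sum_i p_i(0)\,f^*(e_i)$, so $f^*(\alpha)=0$ iff each $p_i(0)=0$ iff $u\mid p_i(u)$ for all $i$, i.e.\ $\alpha\in u\cdot H^*_{S^1}(M)$. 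This avoids having to argue directly that $F^2$ equals the ideal generated by $u$.
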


\subsection{Localization theorem} Let $\alpha \in H^*_{S^1}(M)$ be any element of degree $k$. Then Theorem \ref{theorem : equivariant formality} implies that $\alpha$ can be uniquely
                                      expressed as $$ \alpha = \alpha_k \otimes 1 + \alpha_{k-2} \otimes u + \alpha_{k-4} \otimes u^2 + \cdots $$
                                      where $\alpha_i \in H^i(M)$ for each $i = k, k-2, \cdots$. By Theorem \ref{theorem : equivariant formality}, we have $f^*(\alpha) = \alpha_k$.

                                      \begin{definition}
                                         An \textit{integration along the fiber $M$} is an $H^*(BS^1)$-module homomorphism $\int_M : H^*_{S^1}(M) \rightarrow H^*(BS^1)$ defined by
                                         $$ \int_M \alpha = \langle \alpha_k, [M] \rangle \cdot 1 + \langle \alpha_{k-2}, [M] \rangle \cdot u + \cdots $$
                                         for every $ \alpha = \alpha_k \otimes 1 + \alpha_{k-2} \otimes u + \alpha_{k-4} \otimes u^2 + \cdots \in H^k_{S^1}(M).$ Here, $[M]$ is the fundamental homology class of $M$.
                                      \end{definition}
                                       Note that $\langle \alpha_i, [M] \rangle$ is zero for every $i < \dim M = 2n$, and $\alpha_i = 0$ for every $i > \deg \alpha$ by dimensional reason. Hence we have $\int_M \alpha = \langle \alpha_{2n},[M] \rangle u^{k-2n}$. In particular, if $\deg \alpha < \dim M$, then we have
                                            $$ \int_M \alpha = 0 \in H^*(BS^1).$$
                                      Now, let $F \subset M^{S^1}$ be a fixed component with an inclusion map $i_F : F \hookrightarrow M$. Then it induces a ring homomorphism $$i_F^* : H^*_{S^1}(M) \rightarrow H^*_{S^1}(F) \cong H^*(F) \otimes H^*(BS^1).$$
                                      \begin{theorem}\cite[Kirwan's injectivity theorem]{Ki}\label{theorem : Kirwan injectivity}
                                          Let $(M,\omega)$ be a compact Hamiltonian $S^1$-manifold. For an inclusion $i : M^{S^1} \hookrightarrow M$, the induced map
                                          $$ i^* : H^*_{S^1}(M) \rightarrow H^*_{S^1}(M^{S^1})$$ is injective.
                                      \end{theorem}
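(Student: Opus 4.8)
The plan is to run equivariant Morse--Bott theory with the moment map $H$, building $M$ up over its critical levels and proving injectivity by induction. Since $M$ is compact, $H$ has finitely many critical values $v_1 < \cdots < v_r$, and its critical set is exactly $M^{S^1}$; write $F_j \subset M^{S^1}$ for the (possibly disconnected) union of fixed components in the level $H^{-1}(v_j)$, a closed $S^1$-fixed submanifold. Choose regular values $c_0 < v_1 < c_1 < \cdots < c_{r-1} < v_r < c_r$ and set $M_j := H^{-1}\big((-\infty, c_j]\big)$, an $S^1$-invariant manifold with boundary. I would prove by induction on $j$ that the restriction
\begin{equation*}
H^*_{S^1}(M_j) \longrightarrow H^*_{S^1}(F_1 \sqcup \cdots \sqcup F_j)
\end{equation*}
is injective; the case $j = r$ is the theorem, since $M_r = M$ and $F_1 \sqcup \cdots \sqcup F_r = M^{S^1}$. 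The base case $j = 1$ is immediate, because $M_1$ deformation retracts $S^1$-equivariantly onto its minimum $F_1$.

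For the inductive step I first need the local model near the level $v_j$. Fixing an $S^1$-invariant $\omega$-compatible almost complex structure, the negative normal bundle $\nu^-_j \to F_j$ (spanned by the negative eigenspaces of the Hessian of $H$) acquires an $S^1$-invariant complex structure and splits $S^1$-equivariantly, over each connected component of $F_j$, as a sum of complex line bundles $L_i$ on which $S^1$ acts with \emph{nonzero} weights $w_i$; indeed a normal direction to $F_j$ fixed by $S^1$ would be tangent to $M^{S^1}$, contradicting $F_j \subset M^{S^1}$. In particular $\nu^-_j$ is canonically oriented, so the equivariant Thom isomorphism (applied to an $S^1$-equivariant tubular neighborhood of $F_j$ and carried through the Borel construction) identifies $H^*_{S^1}(M_j, M_{j-1})$ with $\widetilde H^*_{S^1}\big(\mathrm{Th}(\nu^-_j)\big)$, under which the natural map to $H^*_{S^1}(M_j)$ becomes a Gysin pushforward $(i_j)_*$ whose composite with the restriction $i^*_{F_j}$ is cup product with the equivariant Euler class $e := e_{S^1}(\nu^-_j) \in H^*_{S^1}(F_j)$.

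The algebraic crux I would then verify is that $e$ is a non-zero-divisor in $H^*_{S^1}(F_j) \cong H^*(F_j) \otimes \R[u]$: on each connected component one has $e = \prod_i (w_i u + x_i)$ with $x_i \in H^2(F_j)$ nilpotent, so the highest power of $u$ occurring in $e$ has coefficient $\prod_i w_i \neq 0$, a non-zero-divisor; hence $e$ is a non-zero-divisor on each component, so in $H^*_{S^1}(F_j)$. Since $i^*_{F_j} \circ (i_j)_*$ is thus injective, $(i_j)_*$ is injective, so every connecting homomorphism in the long exact sequence of the pair $(M_j, M_{j-1})$ vanishes, giving short exact sequences
\begin{equation*}
0 \longrightarrow \widetilde H^*_{S^1}\big(\mathrm{Th}(\nu^-_j)\big) \xrightarrow{\ (i_j)_*\ } H^*_{S^1}(M_j) \xrightarrow{\ \rho\ } H^*_{S^1}(M_{j-1}) \longrightarrow 0.
\end{equation*}
Now suppose $\alpha \in H^*_{S^1}(M_j)$ restricts to zero on every component of $F_1 \sqcup \cdots \sqcup F_j$. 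Then $\rho(\alpha)$ restricts to zero on every component of $F_1 \sqcup \cdots \sqcup F_{j-1}$, so $\rho(\alpha) = 0$ by the inductive hypothesis, whence $\alpha = (i_j)_* \beta$ for a unique $\beta$; applying $i^*_{F_j}$ gives $0 = e \cdot \beta$, and since $e$ is a non-zero-divisor, $\beta = 0$ and $\alpha = 0$. This closes the induction.

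I expect the main obstacle to be the Morse--Bott bookkeeping rather than any single hard idea: setting up the equivariant handle attachment and Thom isomorphism carefully (the equivariant normal form of $H$ near $F_j$, orientability, and compatibility of excision with the Borel construction), and checking that under these identifications the connecting map really does become cup product with $e$. The one genuinely geometric input is the nonvanishing of the weights of $\nu^-_j$, which is elementary as noted above. I would also remark that the short exact sequences above show by induction that each $H^*_{S^1}(M_j)$, and hence $H^*_{S^1}(M)$, is a free $\R[u]$-module, so the argument simultaneously recovers Theorem~\ref{theorem : equivariant formality}.
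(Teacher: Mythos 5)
Your proof is correct and is essentially the standard equivariant Morse--Bott argument for Kirwan injectivity (the paper itself does not prove this statement but cites Kirwan's monograph, where this inductive argument over the critical levels of $H$, with the key input that the equivariant Euler class of the negative normal bundle is a non-zero-divisor because the normal weights are nonzero, is exactly the proof given). The one refinement you rightly flag as bookkeeping --- the equivariant Morse--Bott normal form and Thom isomorphism identifying $H^*_{S^1}(M_j,M_{j-1})$ with $\widetilde H^*_{S^1}(\mathrm{Th}(\nu^-_j))$ --- is standard, and your closing remark that the split short exact sequences also yield equivariant formality (Theorem~\ref{theorem : equivariant formality}) is likewise accurate.
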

                                      For any $\alpha \in H^*_{S^1}(M)$, we call an image $i_F^*(\alpha)$ \textit{the restriction of $\alpha$ to $F$} and denote by $\alpha|_F = i_F^*(\alpha)$ for simplicity. The following theorem due to Atiyah-Bott \cite{AB} and Berline-Vergne \cite{BV} enable us to compute $\int_M \alpha$ in terms of the fixed points.

                                      \begin{theorem}\label{theorem : localization}(Atiyah-Bott-Berline-Vergne localization)
                                          For any $ \alpha \in H^*_{S^1}(M)$, we have
                                          $$\int_M \alpha = \sum_{F \subset M^{S^1}} \int_F \frac{\alpha|_F}{e^{S^1}(F)} $$
                                          where $e^{S^1}(F)$ is the equivariant Euler class of the normal bundle of $F$.
                                      \end{theorem}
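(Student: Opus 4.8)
The plan is to follow the classical Atiyah--Bott argument, extracting the formula from the behaviour of equivariant cohomology after inverting $u$. Put $R = H^*(BS^1) = \R[u]$, let $\R(u)$ be its field of fractions, and for an $S^1$-space $X$ set $\widetilde{H}^*_{S^1}(X) := H^*_{S^1}(X) \otimes_R \R(u)$. Since $\R(u)$ is flat over $R$ (it is a localization), the functor $\widetilde{H}^*_{S^1}$ still satisfies Mayer--Vietoris and the long exact sequences of pairs, and the fibre integration $\int_M$ together with all restriction and Gysin (push-forward) maps extend $\R(u)$-linearly. The first step is the vanishing lemma $\widetilde{H}^*_{S^1}(M \setminus M^{S^1}) = 0$. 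Every point of $M \setminus M^{S^1}$ has finite stabiliser, so by the slice theorem and compactness of $M$ (cover a shrinking invariant tubular neighbourhood's complement) one covers $M \setminus M^{S^1}$ by finitely many invariant open sets $U_1,\dots,U_N$, each $S^1$-equivariantly homotopy equivalent to an orbit $S^1/\Z_{k_j}$. The Borel construction of $S^1/\Z_{k_j}$ is $B\Z_{k_j}$, so $H^*_{S^1}(U_j;\R) \cong H^*(B\Z_{k_j};\R) \cong \R$ is concentrated in degree $0$; hence $u$ acts as $0$ and $\widetilde{H}^*_{S^1}(U_j) = 0$. A standard Mayer--Vietoris induction on the size of the cover (intersections of the $U_j$ are again fixed-point free) then yields the claim.

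Next I would run the long exact sequence of the pair $(M,\,M\setminus M^{S^1})$ in $\widetilde{H}^*_{S^1}$, combined with excision and the equivariant Thom isomorphism for a closed invariant tubular neighbourhood of $M^{S^1}$ with normal bundle $N = \bigsqcup_F N_F$. The vanishing lemma forces the restriction
$$ i^* : \widetilde{H}^*_{S^1}(M) \longrightarrow \widetilde{H}^*_{S^1}(M^{S^1}) \cong \bigoplus_{F \subset M^{S^1}} H^*(F) \otimes \R(u) $$
to be an isomorphism (it is already injective before localization by Kirwan's Theorem \ref{theorem : Kirwan injectivity}), with inverse assembled from the Gysin maps $i_{F*} : H^*_{S^1}(F) \to H^*_{S^1}(M)$. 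The same analysis gives the projection-type identities, valid after localization,
$$ i_F^* \, i_{F*}\beta = \beta \cup e^{S^1}(F), \qquad \alpha = \sum_{F \subset M^{S^1}} i_{F*}\!\left( \frac{\alpha|_F}{e^{S^1}(F)} \right) \ \text{in}\ \widetilde{H}^*_{S^1}(M), $$
together with $i_G^* i_{F*} = 0$ for $G \neq F$; here $e^{S^1}(F)$ is the equivariant Euler class of $N_F$, and the second identity follows since both sides have the same image under the (now injective) $i^*$. The decisive point is that $e^{S^1}(F)$ is invertible in $\widetilde{H}^*_{S^1}(F)$: its component of top $u$-degree is, up to sign, the product of the nonzero weights of the isotropy representation on $N_F$, hence a nonzero element of $R$.

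With this in hand the theorem is immediate. For $\alpha \in H^*_{S^1}(M)$, view $\alpha$ in $\widetilde{H}^*_{S^1}(M)$, substitute the second identity above, and apply $\int_M$:
$$ \int_M \alpha = \sum_{F \subset M^{S^1}} \int_M i_{F*}\!\left( \frac{\alpha|_F}{e^{S^1}(F)} \right) = \sum_{F \subset M^{S^1}} \int_F \frac{\alpha|_F}{e^{S^1}(F)}, $$
where the last equality is the functoriality of fibre integration for Gysin maps, $\int_M \circ\, i_{F*} = \int_F$ (both compute the push-forward to $BS^1$). A priori this is an identity in $\R(u)$, but its left-hand side lies in $R = \R[u]$; since $R \hookrightarrow \R(u)$, the displayed equation is exactly the asserted formula, and as a by-product it shows that the sum of rational functions on the right is a polynomial.

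The main obstacle is concentrated in the first two steps: proving that $\widetilde{H}^*_{S^1}$ vanishes on a fixed-point-free $S^1$-space, and carefully packaging the equivariant Thom isomorphism into the Gysin maps $i_{F*}$ so as to verify $i_F^* i_{F*} = \cup\, e^{S^1}(F)$ and the invertibility of $e^{S^1}(F)$; once these are settled the formula drops out formally. If one prefers to avoid the module-localization machinery, an alternative de Rham proof works in the Cartan model: fix an invariant metric, let $X$ be the generating vector field and $\theta$ the dual $1$-form, and observe that for $d_{S^1} = d - u\,\iota_X$ one has
$$ d_{S^1}\theta = d\theta - u\,|X|^2, \qquad d_{S^1}\!\left( \frac{\theta}{d_{S^1}\theta} \right) = 1 \ \text{on}\ M \setminus M^{S^1}, $$
the quotient making sense because the degree-zero part $-u|X|^2$ of $d_{S^1}\theta$ is nowhere zero off $M^{S^1}$. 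Writing a closed equivariant form $\alpha$ as $d_{S^1}\!\big(\alpha \cdot \theta/d_{S^1}\theta\big)$ on the complement of a shrinking tubular neighbourhood of $M^{S^1}$ and applying Stokes' theorem reduces $\int_M \alpha$ to boundary integrals which, after a local normal-form computation in $N_F$, converge to $\int_F \alpha|_F / e^{S^1}(F)$ as the neighbourhood shrinks.
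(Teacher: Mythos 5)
The paper does not prove this statement: it is quoted as a classical theorem with references to Atiyah--Bott \cite{AB} and Berline--Vergne \cite{BV}, so there is no internal argument to compare against. Your outline is a faithful and essentially correct reproduction of the standard Atiyah--Bott proof: invert $u$, show $\widetilde{H}^*_{S^1}$ vanishes on the fixed-point-free locus, deduce that $i^*$ becomes an isomorphism with inverse built from the Gysin maps $i_{F*}$ divided by the (now invertible) equivariant Euler classes, and push forward to $BS^1$; the observation that the left-hand side lies in $\R[u]$ so the identity descends from $\R(u)$ is exactly the right closing step, and your Cartan-model alternative is the Berline--Vergne route. The one place where your sketch glosses over a real technicality is the Mayer--Vietoris induction for the vanishing lemma: the intersections $U_i \cap U_j$ of tubes around orbits are fixed-point free but are not themselves tubes, so the induction as stated is slightly circular; the standard fixes are either to induct on the statement ``any invariant open set covered by at most $N$ tubes has vanishing localized cohomology'' with a further cover of the intersections, or to bypass covers entirely by noting that for a locally free action $H^*_{S^1}(X;\R) \cong H^*(X/S^1;\R)$ is finite-dimensional, so $u$ acts nilpotently. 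This is a known wrinkle in every textbook treatment rather than a flaw in your approach, and with it patched your proof is complete.
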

                                      In particular, if every fixed point is isolated, then we have the following corollary.

                                      \begin{corollary}\label{corollary : localization, isolated}
                                          Let $(M,\omega)$ be a $2n$-dimensional smooth compact symplectic manifold equipped with a Hamiltonian circle action with isolated fixed points. Let $\alpha \in H^*_{S^1}(M)$. Then we have
                                          $$\int_M \alpha = \sum_{F \in M^{S^1}} \frac{\alpha|_F}{\prod_i w_i(F)u}$$
                                          where the sum is taken over all fixed points, and $\{w_1(F), \cdots, w_n(F)\}$ is the weights of the tangential $S^1$-representation on $T_FM$.
                                      \end{corollary}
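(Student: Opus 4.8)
The plan is to derive the corollary directly from the general localization formula of Theorem~\ref{theorem : localization}, by evaluating its two ingredients — the fibre integration $\int_F$ and the equivariant Euler class $e^{S^1}(F)$ — in the special case where every fixed component $F$ is a single point. All the identities below take place in the localization of $H^*(BS^1)=\R[u]$ obtained by inverting $u$; this is legitimate precisely because at an isolated fixed point every weight $w_i(F)$, hence every Euler class $e^{S^1}(F)$, is a unit there.

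First I would identify $\int_F$. When $F$ is a point, the fibre bundle $F\times_{S^1}ES^1\to BS^1$ appearing in~\eqref{diagram : M-bundle over BS^1} is just the identity map $BS^1\to BS^1$, so its fibre is $0$-dimensional and the fibrewise integration $\int_F\colon H^*_{S^1}(F)\to H^*(BS^1)$ is the identity homomorphism. (Equivalently, from the definition: any $\beta\in H^{2k}_{S^1}(F)$ equals $\beta_0\otimes u^{k}$ with $\beta_0\in H^0(F)\cong\R$, whence $\int_F\beta=\langle\beta_0,[F]\rangle u^{k}=\beta$.) Therefore each summand $\int_F\frac{\alpha|_F}{e^{S^1}(F)}$ in Theorem~\ref{theorem : localization} is simply $\frac{\alpha|_F}{e^{S^1}(F)}$.

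Next I would compute $e^{S^1}(F)$. Since $S^1$ is compact, fix an $S^1$-invariant $\omega$-compatible almost complex structure on $M$; this makes each $T_FM$ into a complex $S^1$-representation, and as $F$ is \emph{isolated} we have $(T_FM)^{S^1}=0$, so $T_FM\cong\bigoplus_{i=1}^{n}\C_{w_i(F)}$ with $w_i(F)\in\Z\setminus\{0\}$, where $\C_w$ denotes the one-dimensional representation of weight $w$. The normal bundle of $F$ in $M$ is all of $T_FM$, so by multiplicativity of the equivariant Euler class together with the standard value $e^{S^1}(\C_w\to\mathrm{pt})=c_1^{S^1}(\C_w)=w\,u\in H^2(BS^1)$ (with the normalization $\langle u,[\C P^1]\rangle=1$ of Section~2, which also supplies the orientations needed to apply Theorem~\ref{theorem : localization}), we get
$$ e^{S^1}(F)=\prod_{i=1}^{n}e^{S^1}\big(\C_{w_i(F)}\big)=\prod_{i=1}^{n}w_i(F)\,u. $$

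Substituting these two facts into Theorem~\ref{theorem : localization} then gives
$$ \int_M\alpha=\sum_{F\subset M^{S^1}}\int_F\frac{\alpha|_F}{e^{S^1}(F)}=\sum_{F\in M^{S^1}}\frac{\alpha|_F}{\prod_i w_i(F)\,u}, $$
which is a finite sum because a circle action on a compact manifold with only isolated fixed points has finitely many fixed points. The argument is essentially formal; the only point that really needs care — the ``main obstacle'' here — is the bookkeeping of sign conventions, i.e. making sure that the equivariant first Chern class of the weight-$w$ line over a point is taken as $+w\,u$ rather than $-w\,u$, and that all divisions are carried out in the localization of $H^*(BS^1)$ in which each $e^{S^1}(F)$ is invertible; once the conventions of Section~2 are in force there is nothing further to verify.
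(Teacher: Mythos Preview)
Your proposal is correct and matches the paper's approach: the paper states the corollary without proof, treating it as an immediate specialization of Theorem~\ref{theorem : localization}, and your derivation supplies exactly the two ingredients the reader is expected to fill in (that $\int_F$ is the identity when $F$ is a point, and that $e^{S^1}(F)=\prod_i w_i(F)\,u$).
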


\subsection{Equivariant symplectic classes}     Let $H : M \rightarrow \R$ be a moment map for a closed Hamiltonian $S^1$-manifold $(M,\omega)$. For the product space $M \times ES^1$,
                                                consider a two form $\omega_H := \omega + d(H \cdot \theta)$, regarding $\omega$ as a pull-back
                                                of $\omega$ along the projection $M \times ES^1 \rightarrow M$ on the first factor and $\theta$ as a pull-back of a connection 1-form on the principal $S^1$-bundle $ES^1 \rightarrow BS^1$ along the projection
                                                $M \times ES^1 \rightarrow ES^1$ on the second factor. Here, the connection form $\theta$ on $ES^1$ is a finite dimensional approximation of the connection form of the principal $S^1$-bundle $S^{2n-1} \rightarrow \C P^n$. (See \cite{Au} for the details). It is not hard to show that $\mathcal{L}_{\underbar{X}} \omega_H = i_{\underbar{X}} \omega_H = 0$ where $\underbar{X}$ is the fundamental vector field on $M \times ES^1$ generated by the diagonal action. Hence we can push-forward $\omega_H$ to the quotient $M \times_{S^1} ES^1$ and denote by $\widetilde{\omega}_H$ the push-forward of $\omega_H$. Obviously, the restriction of $\widetilde{\omega}_H$ on each fiber $M$ is
                                                precisely $\omega$ and we call $\widetilde{\omega}_H$ \textit{the equivariant symplectic form with respect to $H$} and the corresponding
                                                cohomology class $[\widetilde{\omega}_H] \in H^2_{S^1}(M)$ is called \textit{the equivariant symplectic class with respect to $H$}.
                                                The restriction of the equivariant symplectic class to each fixed component can be easily computed as follows.
                                                \begin{proposition}\label{proposition : equivariant cohomology class}
                                                    Let $F \in M^{S^1}$ be a fixed component of the given Hamiltonian circle action. Then we have
                                                    $$[\widetilde{\omega}_H]|_F = [\omega]|_F \otimes 1 - H(F) \otimes u \in H^*(F) \otimes H^*(BS^1).$$
                                                    In particular, if $F$ is isolated, then we have $[\widetilde{\omega}_H]|_F = - H(F)u.$
                                                \end{proposition}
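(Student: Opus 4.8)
The plan is to compute the restriction by combining the naturality of the equivariant symplectic class with a single local computation at a fixed point; one could alternatively push the form $\omega_H$ all the way down using finite-dimensional approximations $S^{2n+1}$ of $ES^1$, but the naturality shortcut handles the $H^2(F)$-part for free and isolates the only place where the explicit model $\omega_H = \omega + d(H\theta)$ is really needed.

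Write $\iota : F \hookrightarrow M$ for the inclusion, $i_F : F \times_{S^1} ES^1 \hookrightarrow M \times_{S^1} ES^1$ for the induced map of Borel constructions, and $f_F$, $f$ for the fiber inclusions of $F$ and of $M$, so that $i_F \circ f_F = f \circ \iota$. Since $S^1$ acts trivially on $F$, we have $F \times_{S^1} ES^1 = F \times BS^1$ and $H^*_{S^1}(F) = H^*(F) \otimes H^*(BS^1)$; because $H^1(BS^1) = 0$ and higher-degree terms vanish for dimensional reasons, we may write $[\widetilde{\omega}_H]|_F = a \otimes 1 + c \otimes u$ with $a \in H^2(F)$ and $c \in \R \cong H^0(F)$. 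First I would determine $a$: applying $\iota^*$ to the identity $f^*[\widetilde{\omega}_H] = [\omega]$ from Theorem \ref{theorem : equivariant formality} and using $i_F \circ f_F = f \circ \iota$ gives $f_F^*\big([\widetilde{\omega}_H]|_F\big) = [\omega]|_F$; on the other hand $f_F^*(a \otimes 1 + c \otimes u) = a$, since $f_F$ followed by the projection $F \times BS^1 \to BS^1$ is constant and hence kills $u$. Thus $a = [\omega]|_F$.

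Next I would determine $c$ by restricting once more to a point $p \in F$, which kills the $a$-term because $H^2(\mathrm{pt}) = 0$, so that $c\, u = i_p^*\big([\widetilde{\omega}_H]|_F\big)$ is simply the class of $\omega_H$ restricted to $\{p\} \times ES^1$. As $p$ is a critical point of the Morse function $H$, we have $\omega|_p = 0$ and $dH_p = 0$, hence $\omega_H|_{\{p\} \times ES^1} = d(H\theta)|_{\{p\} \times ES^1} = H(p)\, d\theta$; pushing to $BS^1$, the form $d\theta$ descends to the curvature of the chosen connection on $ES^1 \to BS^1$, whose class in $H^2(BS^1) = \R u$ is $-u$ under the normalization $\langle u, [\CP^1] \rangle = 1$. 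Therefore $c = -H(p) = -H(F)$ and $[\widetilde{\omega}_H]|_F = [\omega]|_F \otimes 1 - H(F) \otimes u$; the ``in particular'' clause is then immediate since $H^2(F) = 0$ when $F$ is an isolated point. I expect the only real friction to be this last sign bookkeeping — fixing mutually compatible conventions for the moment map, for the degree-two generator $u$, and for the connection form $\theta$ as in \cite{Au} — since every other step is formal.
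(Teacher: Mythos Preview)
Your proof is correct and follows essentially the same line as the paper's: both rest on the fact that $dH$ vanishes along the fixed set and that the pushforward of $d\theta$ to $BS^1$ is the curvature class $-u$. The only difference is organizational---the paper computes both summands at once directly on $F\times ES^1$ (using that $H$ is constant on $F$, hence $dH|_{F\times ES^1}=0$), while you peel off the $[\omega]|_F$-term by naturality of the fiber inclusion before reading off the $u$-coefficient at a single point.
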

                                                \begin{proof}
                                                    Consider a push-forward of $\widetilde{\omega}_H|_F = (\omega - dH \wedge \theta - H \cdot d\theta)|_{F \times ES^1}$ to $F \times BS^1$. Since the restriction $dH|_{F \times ES^1}$ vanishes, we have $[\widetilde{\omega}_H]|_F = [\omega]|_F \otimes 1 - H(F) \otimes \widetilde{[d\theta]}|_{BS^1}$ where $\widetilde{[d\theta]}$ is a push-forward of $[d\theta]$ to $F \times BS^1$. Since the push-forward of $d\theta$ is a curvature form which represents the first Chern class of $ES^1 \rightarrow BS^1$, we have $\widetilde{[d\theta]} = -u$. Therefore $[\widetilde{\omega}_H]|_F = [\omega]|_F \otimes 1 - H(F) \otimes u.$
                                                \end{proof}

\section{Proof of Theorem \ref{theorem : main}}

For a Hamiltonian circle action on $(M,\omega)$ with isolated fixed points, recall that a moment map $H : M \rightarrow \R$ for the action is a perfect Morse function. For each fixed point $p \in M^{S^1}$, we denote by $\mathrm{ind}(p)$ a Morse index of $p$ with respect to $H$. We say that $H$ is \textit{index-increasing} if $$\mathrm{ind}(p)< \mathrm{ind}(q) ~\text{implies} ~H(p) < H(q)$$ for every pair of fixed points $p$ and $q$.

\begin{lemma}\label{lemma : dimension of H2kS^1}
    For each $2k \leq 2n$, a dimension of $H^{2k}_{S^1}(M)$ is $b_0 + b_2 + \cdots + b_{2k}$ where $b_i$ is the $i$-th Betti number of $M$.
\end{lemma}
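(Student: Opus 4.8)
The plan is to deduce this directly from equivariant formality (Theorem \ref{theorem : equivariant formality}) together with the computation $H^*(BS^1) = \R[u]$ with $\deg u = 2$. The content of Theorem \ref{theorem : equivariant formality} is a \emph{grading-preserving} isomorphism of graded $\R$-vector spaces $H^*_{S^1}(M) \cong H^*(M) \otimes_{\R} \R[u]$, where on the right-hand side one uses the total grading $\deg(\beta \otimes u^m) = \deg \beta + 2m$. So the whole lemma reduces to counting, in this tensor product, the basis monomials of degree $2k$.

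First I would isolate the degree $2k$ summand of $H^*(M)\otimes_\R \R[u]$. A homogeneous element $\beta \otimes u^m$ has degree $2k$ exactly when $\deg\beta = 2k - 2m$; since $\deg\beta \geq 0$ this forces $0 \leq m \leq k$, and for each such $m$ the corresponding contribution is $H^{2k-2m}(M)\otimes u^m \cong H^{2k-2m}(M)$. Reindexing by $j = k-m$ then gives
$$ H^{2k}_{S^1}(M) \;\cong\; \bigoplus_{j=0}^{k} H^{2j}(M), $$
and passing to dimensions yields $\dim_{\R} H^{2k}_{S^1}(M) = b_0 + b_2 + \cdots + b_{2k}$, as claimed, for every $2k \leq 2n$. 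Note that the odd cohomology of $M$ plays no role here, simply because $u$ has even degree; in any case, for a Hamiltonian $S^1$-manifold with isolated fixed points the moment map $H$ is a perfect Morse function whose critical points all have even index, so the odd Betti numbers of $M$ vanish.

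There is no real obstacle in this argument; the only point deserving a word of care is that the isomorphism in Theorem \ref{theorem : equivariant formality} is degree-preserving, which is exactly how it is stated (it is realized by lifting a homogeneous $\R$-basis of $H^*(M)$ to $H^*_{S^1}(M)$ along $f^*$). As an alternative route, one could instead invoke the Leray--Hirsch theorem for the $M$-bundle $M \hookrightarrow M\times_{S^1}ES^1 \xrightarrow{\pi} BS^1$ of diagram \eqref{diagram : M-bundle over BS^1} and run the same degree count; this bypasses any appeal to formality as a black box but leads to the identical bookkeeping.
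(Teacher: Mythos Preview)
Your proof is correct and follows essentially the same approach as the paper: both invoke equivariant formality (Theorem \ref{theorem : equivariant formality}) to obtain a graded isomorphism $H^{2k}_{S^1}(M) \cong \bigoplus_{j=0}^{k} H^{2j}(M)\otimes H^{2k-2j}(BS^1)$ and then use $\dim_{\R} H^{2i}(BS^1)=1$. Your additional remarks on odd cohomology and the Leray--Hirsch alternative are fine but not needed.
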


\begin{proof}
    Since $M$ is equivariantly formal by Theorem \ref{theorem : equivariant formality}, we have 
    $$H^{2k}_{S^1}(M) \cong H^0(M) \otimes H^{2k}(BS^1) \oplus \cdots \oplus H^{2k}(M) \otimes H^0(BS^1).$$
    Hence it follows from $\dim_{\R} H^{2i}(BS^1) = 1$ for each $i$. 
     
\end{proof}

\begin{lemma}\label{lemma : vanishing lemma on points}
    Let $\mathcal{P} = \{p_1,\cdots,p_r\}$ be any subset of the fixed point set $M^{S^1}$ with $r < b_0 + b_2 + \cdots + b_{2k}$. Then there exists a non-zero class $\alpha \in H^{2k}_{S^1}(M)$
    such that $\alpha|_{p_i}=0$ for every $i=1,\cdots,r$.
\end{lemma}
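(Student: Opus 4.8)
The plan is to prove this by a pure dimension count applied to the restriction map onto the fixed points of $\mathcal{P}$, exactly as outlined in the introduction.

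\medskip

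First I would record that since each $p_i \in \mathcal{P}$ is an isolated fixed point, the inclusion $\iota_{p_i} : p_i \hookrightarrow M$ induces a ring homomorphism $\iota_{p_i}^* : H^*_{S^1}(M) \rightarrow H^*_{S^1}(p_i) \cong H^*(BS^1) = \R[u]$, and in degree $2k$ we have $H^{2k}_{S^1}(p_i) \cong \R$, generated by $u^k$. Assembling these over $i = 1, \dots, r$ yields an $\R$-linear restriction map
\[
\rho : H^{2k}_{S^1}(M) \longrightarrow \bigoplus_{i=1}^{r} H^{2k}_{S^1}(p_i) \cong \R^{r}, \qquad \rho(\alpha) = (\alpha|_{p_1}, \dots, \alpha|_{p_r}).
\]
The class we seek is precisely a nonzero element of $\ker \rho$.

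\medskip

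Next I would invoke Lemma \ref{lemma : dimension of H2kS^1}, which gives $\dim_{\R} H^{2k}_{S^1}(M) = b_0 + b_2 + \cdots + b_{2k}$; note that this identity persists even when $2k > 2n$, since then $b_{2j} = 0$ for $2j > 2n$ and the module $H^*_{S^1}(M)$ is free over $H^*(BS^1)$ by Theorem \ref{theorem : equivariant formality}. By the hypothesis $r < b_0 + b_2 + \cdots + b_{2k}$, the domain of $\rho$ has strictly larger dimension than the codomain $\R^{r}$, so $\rho$ cannot be injective and $\ker \rho \neq 0$. Choosing any nonzero $\alpha \in \ker \rho$ gives a class in $H^{2k}_{S^1}(M)$ with $\alpha|_{p_i} = 0$ for every $i = 1, \dots, r$, as required.

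\medskip

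There is no real obstacle here: the statement is an elementary consequence of the rank--nullity theorem once the equivariant formality dimension count of Lemma \ref{lemma : dimension of H2kS^1} is in hand, together with the trivial computation $H^{2k}_{S^1}(\mathrm{pt}) \cong \R$. In particular, Kirwan's injectivity theorem (Theorem \ref{theorem : Kirwan injectivity}) is \emph{not} needed for this lemma; it will only enter later, when one needs to control the image of $\rho$ or to detect nonvanishing of a restricted class. The one point to state carefully is the remark above that the dimension formula is valid without the restriction $2k \leq 2n$, so that the lemma is stated for all relevant $k$.
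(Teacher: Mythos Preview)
Your proof is correct and follows exactly the same approach as the paper: define the $\R$-linear restriction map from $H^{2k}_{S^1}(M)$ to $\bigoplus_i H^{2k}_{S^1}(p_i)\cong\R^r$, invoke the dimension count of Lemma~\ref{lemma : dimension of H2kS^1}, and conclude that the kernel is nonzero by rank--nullity. Your additional remarks (validity for $2k>2n$, and that Kirwan injectivity is not needed here) are correct but extraneous to the argument as used in the paper.
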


\begin{proof}
    For any $\alpha \in H^{2k}_{S^1}(M)$ and $p \in M^{S^1}$, the restriction $\alpha|_p \in H^{2k}_{S^1}(p) \cong \R$ is a polynomial with a variable $u$ of degree $k$.
    Hence we have $\alpha|_{p} = au^k$ for some $a \in \R$.
    Let's consider the following map
    \begin{displaymath}
        \begin{array}{cccc}
            \phi_{2k}^{\mathcal{P}} : & H^{2k}_{S^1}(M;\R) & \longrightarrow & \R^r\\ 
                                      &     \alpha         & \mapsto         & (\alpha|_{p_1}, \cdots, \alpha|_{p_r})\\
        \end{array}
    \end{displaymath}
    Then $\phi_{2k}^{\mathcal{P}}$ is $\R$-linear and it has a non-trivial kernel by dimensional reason.
\end{proof}

Now, we are ready to prove our main theorem \ref{theorem : main}.

\begin{proof}[Proof of Theorem \ref{theorem : main}]

Let $(M,\omega)$ be a $2n$-dimensional symplectic manifold with a Hamiltonian circle action with only isolated fixed points. By our assumption, a moment map $H : M \rightarrow \R$ for the action is assumed to be \textit{index-increasing}. For each $i=0,1,\cdots,n$, we denote by $\Lambda_{2i}$ the set of all fixed points of index $2i$ so that we have
$$ b_{2i} := b_{2i}(M) = |\Lambda_{2i}| = |\Lambda_{2n-2i}|. $$
Now, let's assume that $b_{2k} > b_{2k+2}$ for some $0 \leq 2k < n$. Let
\begin{displaymath}
    \begin{array}{l}
        \mathcal{P}_1 = \cup_{i\geq 1} \Lambda_{2k-4i+2}, \\
        \mathcal{P}_2 = \cup_{i \geq 1} \Lambda_{2n-2k+4i},\\
        \mathcal{P}_3 = \Lambda_{2n-2k-2},\\
        \mathcal{P} = \mathcal{P}_1 \cup \mathcal{P}_2 \cup \mathcal{P}_3.
    \end{array}
\end{displaymath}
Since $2n - 2k - 2 > 2k - 2 \geq 2k - 4i + 2$ and $2n - 2k - 2 < 2n - 2k + 4i$ for every $i \geq 1$, $\mathcal{P}_i$'s are pairwise disjoint so that $\mathcal{P}$ is the disjoint union of $\mathcal{P}_1$. $\mathcal{P}_2$, and $\mathcal{P}_3$. Hence the cardinality of $\mathcal{P}$ is given by
\begin{displaymath}
    \begin{array}{lll}
        |\mathcal{P}|  & = |\mathcal{P}_1| + |\mathcal{P}_2| + |\mathcal{P}_3| \\
                       & = \sum_{i \geq 1} b_{2k-4i+2} + \sum_{i \geq 1} b_{2n-2k+4i} + b_{2n-2k-2}\\
                       & = \sum_{i \geq 1} b_{2k-4i+2} + \sum_{i \geq 1} b_{2k - 4i} + b_{2k+2} \\
                       & = b_0 + b_2 + \cdots + b_{2k-2} + b_{2k+2} < b_0 + b_2 + \cdots + b_{2k-2} + b_{2k}.\\
    \end{array}
\end{displaymath}
Hence there exists a non-zero class $\alpha \in H^{2k}_{S^1}(M)$ which vanishes on $\mathcal{P}$ by Lemma \ref{lemma : vanishing lemma on points}.
Let
\begin{displaymath}
    \begin{array}{lll}
        I_1  & = &\Lambda_0 \cup \Lambda_2 \cup \cdots \cup \Lambda_{2k} \\
        I_2  & = &\Lambda_{2k+2}\\
        I_3  & = &\Lambda_{2k+4}\\
        \vdots &  &\vdots\\
        I_{n-2k-2} & = & \Lambda_{2n-2k-6}\\
        I_{n-2k-1} & = & \Lambda_{2n-2k-4} \cup \Lambda_{2n-2k-2} \cup \Lambda_{2n-2k}\\
        I_{n-2k} & = & \Lambda_{2n-2k+2} \cup \Lambda_{2n-2k+4} \cup \cdots \cup \Lambda_{2n}\\
    \end{array}
\end{displaymath}
be a decomposition of the fixed point set $M^{S^1}$ into disjoint $(n-2k)$-subsets of $M^{S^1}$. Since $H$ is index-increasing, there exists $(n-2k-1)$ numbers
$\{ r_1, r_2, \cdots, r_{n-2k-1} \}$ such that
$$ H(p) < r_i < H(q) $$
for every $p \in I_i$ and $q \in I_{i+1}$. 
For each $i=1,\cdots,n-2k-1$, let $H_i = H - r_i$ be a new moment map and $[\omega_{H_i}]$ be the equivariant symplectic class with respect to $H_i$.
Let  $$\beta := \alpha^2 \cdot [\omega_{H_1}] \cdot [\omega_{H_2}] \cdots [\omega_{H_{n-2k-1}}] \in H^{2n-2}_{S^1}(M).$$
Applying the localization theorem \ref{theorem : localization} to $\beta$, we have
\begin{displaymath}\label{equation : integration of beta}
    \begin{array}{lll}
        \displaystyle \int_M \beta & = & \sum_{z \in M^{S^1}} \frac{(\alpha|_z)^2 \cdot [\omega_{H_1}]|_z \cdot [\omega_{H_2}]|_z \cdots [\omega_{H_{n-2k-1}}]|_z}{e^{S^1}(z)} = 0 \\
    \end{array}
\end{displaymath}
by dimensional reason.
Since $\alpha \neq 0$ in $H^{2k}_{S^1}(M)$, there exists some fixed point $z \in M^{S^1}$ such that $\alpha|_z \neq 0$ by Kirwan's injectivity theorem \ref{theorem : Kirwan injectivity}. Now, let's determine a sign of each summand of the integration above.
For every $z \in I_1$ with $\alpha|_z \neq 0$, the sign of $e^{S^1}(z)$ equals to the sign of $(-1)^k$ since $\alpha$ vanishes on $\mathcal{P}_1$ so that $\mathrm{ind}(z)$ must be of $2k-4i$ for some positive integer $i$. Also by Proposition \ref{proposition : equivariant cohomology class}, we can easily see that
\begin{displaymath}
    \begin{array}{l}
        (\alpha|_z)^2 \cdot [\omega_{H_1}]|_z \cdot [\omega_{H_2}]|_z \cdots [\omega_{H_{n-2k-1}}]|_z \\
         = (\alpha|_z)^2(-H_1(z))(-H_2(z))\cdots (-H_{n-2k-1}(z)) > 0\\
    \end{array}
\end{displaymath}
since $H_i(z) < 0 $ for every $z \in I_1$. Hence we have
$$ (-1)^k \cdot \sum_{z \in I_1} \frac{(\alpha|_z)^2 \cdot [\omega_{H_1}]|_z \cdot [\omega_{H_2}]|_z \cdots [\omega_{H_{n-2k-1}}]|_z}{e^{S^1}(z)} \geq 0.$$
The equality holds if and only if $\alpha|_z = 0$ for every $z \in I_1$.
For $z \in I_j$ with $\alpha|_z \neq 0$ and $j = 2, \cdots, n-2k-2$, the sign of $e^{S^1}(z)$ equals to the sign of $(-1)^{k+j-1}$ since $\mathrm{ind}(z) = 2k + 2j - 2$.
Also, we have $H_i(z) > 0 $ for $i < j$ and $H_i(z) < 0$ for $i \geq j$ so that the numerator has a sign of $(-1)^{j-1}$. Therefore,
$$ (-1)^k \cdot \sum_{z \in I_j} \frac{(\alpha|_z)^2 \cdot [\omega_{H_1}]|_z \cdot [\omega_{H_2}]|_z \cdots [\omega_{H_{n-2k-1}}]|_z}{e^{S^1}(z)} \geq 0$$
for all $j=2, \cdots, n-2k-2$. And the equality holds if and only if $\alpha|_z = 0$ for every $z \in I_j$.
For $z \in I_{n-2k-1}$ with $\alpha|_z \neq 0$, the index of $z$ is either $2n-2k-4$ or $2n-2k$ since $\alpha$ vanishes on $\mathcal{P}_3 = \Lambda_{2n-2k-2}$.
Hence the sign of $e^{S^1}(z)$ equals to $(-1)^{n-k}$.
Also, $H_i(z) > 0 $ for every $i < n-2k-1$ so that the sign of numerator equals to the sign of $(-1)^{n-2k-2}$.
Therefore, we have
$$ (-1)^k \cdot \sum_{z \in I_{n-2k-1}} \frac{(\alpha|_z)^2 \cdot [\omega_{H_1}]|_z \cdot [\omega_{H_2}]|_z \cdots [\omega_{H_{n-2k-1}}]|_z}{e^{S^1}(z)} \geq 0.$$
The equality holds if and only if $\alpha|_z = 0$ for every $z \in I_{n-2k-1}$.
Finally, for $z \in I_{n-2k}$ with $\alpha|_z \neq 0$, the sign of $e^{S^1}(z)$ should be $(-1)^{n-k-1}$ since $\alpha$ vanishes on $\mathcal{P}_2$ so that $\mathrm{ind}(z)$ is $2n-2k+2$.
Also, $H_i(z) > 0$ for every $i$ so that the sign of numerator is $(-1)^{n-2k-1}$. Therefore
$$ (-1)^k \cdot \sum_{z \in I_{n-2k}} \frac{(\alpha|_z)^2 \cdot [\omega_{H_1}]|_z \cdot [\omega_{H_2}]|_z \cdots [\omega_{H_{n-2k-1}}]|_z}{e^{S^1}(z)} \geq 0.$$
The equality holds if and only if $\alpha|_z = 0$ for every $z \in I_{n-2k}$.

To sum up, every summand has the same sign, and there is at least one summand which is non-zero. Hence the integral $\int_M \beta$ cannot be zero which leads a contradiction.
This finishes the proof.

\end{proof}

\end{document}